\newtheorem{thm}{Theorem}
\newtheorem{lem}{Lemma}
\newtheorem{ex}{Example}
\newtheorem{result}{Result}
\newtheorem{com}{Comment}
\newtheorem{remark}{Remark}
\newtheorem{cor}{Corollary}
\newcommand{\blind}{0}
\begin{document}

\def\spacingset#1{\renewcommand{\baselinestretch}%
{#1}\small\normalsize} \spacingset{1}


\if0\blind
{
  \title{\bf On negative dependence inequalities and maximal scores in round-robin tournaments}

  \author{
  Yaakov Malinovsky
    \thanks{email: yaakovm@umbc.edu}
   \\
    Department of Mathematics and Statistics\\ University of Maryland, Baltimore County, Baltimore, MD 21250, USA\\
    \\
  John W. Moon
    \thanks{email: jwmoon@ualberta.ca}
   \\
    Department of Mathematical and Statistical Sciences\\ University of Alberta, Edmonton, AB T6G 2G1, Canada\\
}
  \maketitle
} \fi

\if1\blind
{
  \bigskip
  \bigskip
  \bigskip
  \begin{center}
    {\LARGE\bf Title}
\end{center}
  \medskip
} \fi

\begin{abstract}
We extend Huber's (1963) inequality for the joint distribution function of negative
dependent scores in round-robin tournaments. As a byproduct, this extension implies convergence in
probability of the maximal score in round-robin tournaments in a more general setting.

\end{abstract}

\noindent%
{\it Keywords: large deviation, negative correlation, probabilistic inequalities, round-robin tournaments }

\noindent%
{\it MSC2020: 60E15, 05C20, 60F10}

\spacingset{1.40} 
\section{Introduction and Background}
In a classical round-robin tournament, each of $n$ players wins or loses against each of the other $n-1$ players \citep{Moon2013}.
Denote by $X_{ij}$ the score of player $i$ after the game with player $j, j\neq i$.
We assume that all  ${\displaystyle {n \choose 2}}$ pairs of scores $\left(X_{12}, X_{21}\right),\ldots,\left(X_{1n}, X_{n1}\right),\ldots,$ $\left(X_{n-1,n}, X_{n,n-1}\right)$ are independent.
Let $s_i=\sum_{j=1, j\neq i}^{n}{X_{ij}}$ be the score of player $i$ $(i=1,\ldots,n)$ after playing with all $n-1$ opponents.
We use a standard notation and denote by $s_{(1)}\leq s_{(2)}\leq \ldots \leq s_{(n)}$ the order statistics of the random variables
$s_1,s_2,\ldots,s_n$; and we denote by $s_{1}^{*}, s_{2}^{*},\ldots,s_{n}^{*}$
normalized scores (zero expectation and unit variance)
with corresponding order statistics $s_{(1)}^{*}, s_{(2)}^{*},\ldots,s_{(n)}^{*}$.

Measuring players strengths in chess tournaments by modeling paired comparisons of strength has a long history and appeared in \cite{Z1929}.
Zemerlo's model is given by $P(\text{player}\,\,\,i\,\,\, defeats\,\,\,\, \text{player}\,\,\,\,j)=P(X_{ij}=1)={\displaystyle \frac{u_i}{u_i+u_j}},$ where $u_i$ and $u_j$ are unknown strengths of players $i$ and $j$, respectively, and for $i\neq j$, $X_{ij}+X_{ji}=1, X_{ij}\in \left\{0,1\right\}$.
\cite{Z1929} used the maximum likelihood (ML) method to estimate the parameters $\left\{u_i\right\}$.
This approach was rediscovered by \cite{BT1952} and \cite{F1957} and is usually referred to as the Bradley-Terry model; see, e.g. \cite{D1988}.
Interesting historical comments related to Zemerlo's model
can be found in the article "Comments on Zermelo (1929)" of the book by \cite{DE2001} and in \cite{G2013}.

\cite{SY1999} estimate $\left\{ u_i\right \}$ based on the data $X_{ij}, 1\leq i<j\leq n$, and proved the consistency and  asymptotic normality of the ML estimators.
\cite{CDA2011} investigated maximum likelihood estimates pertaining to the degree sequences of random graphs generated by what the authors refer to as a "close cousin" of the Bradley-Terry model.
Further, \cite{CDL2017} investigated the asymptotic probability that the best player wins, assuming that the strengths of the players are random variables.

Let $r_n$ denote the probability  that an ordinary round-robin tournament with $n$ labelled vertices has a unique vertex with maximum score, assuming all the ${\displaystyle 2^{\binom{n}{2}}}$ such tournaments are equally likely. \cite{E2013} (Section nine) gave the values $r_4 = .5, r_5 = .586, r_6 =.627, r_7 = .581$, and $r _8 = .634$ ({\it We remark that it follows from Table 1 in \cite{D1959} that $r_8 = 160,241,152/2^{28} = .596\ldots$}). Epstein also stated that as $n$ increases indefinitely, $r_n$ approaches unity. However, we are not aware of a proof of the final conclusion.

Material on round-robin tournaments can be found in \cite{HM1966}, \cite{Moon2013}, and \cite{R2014}.
\cite{L1953} has given necessary and sufficient conditions for a set of integers to be the score sequence of some tournament.
Landau was interested in animal behavior, and his work grew out of dealing with the pecking orders of chickens.
Landau's theorem has been reproved and generalized by a number of authors in a variety of ways; see, for example,
\cite{Moon2013} (Section 22), the survey paper \cite{GR1999}, and, more recently, \cite{HMR2011}, and \cite{BF2015}.

\cite{H1963} was concerned with the asymptotic behavior of the
highest score in a paired comparison experiment when the number $n$ of treatments (players) is very
large. He assumed that the players are all of equal strength, except for a single 'outlier,' which will be preferred with probability $p>1/2$ when compared with any other player. Each pair of players is compared exactly once and no ties are permitted. Huber proved that the probability that the outlier has the maximum score tends to $1$ for all fixed $p>1/2$ as $n$ tends to infinity.
A byproduct of Huber's work when $p=1/2$ is the following result, which he gave as a Corollary.

\begin{result}[\cite{H1963}]
\label{eq:General}
If $X_{ij}\in \left\{0,1\right\}, X_{ij}+X_{ji}=1, p_{ij}=P(X_{ij}=1)=\frac{1}{2}$, and
if $n\rightarrow \infty$, then $s_{(n)}^{*}-\sqrt{2\log(n-1)}\rightarrow 0$ in probability.
\end{result}

A key step in Huber's approach was an inequality for the joint cumulative distribution function of the negatively dependent scores $s_1,\ldots,s_n$, where $X_{ij}+X_{ji}=1$; extending this inequality permits one to estimate the maximal scores in more general tournament settings. A verbatim statement of Huber's inequality is given below.

\begin{lem}[\cite{H1963}]
\label{eq:Lemma}
For any probability matrix $(p_{ij})$ and any numbers $(k_1,\ldots,k_m)$, $m\leq n$, the joint cumulative distribution function of the scores $s_1,\ldots,s_m$ satisfies
\begin{equation}
\label{eq:iH}
P\left(s_1<k_1,\ldots,s_m<k_m\right)\leq P\left(s_1<k_1\right)\cdots P\left(s_m<k_m\right).
\end{equation}
\end{lem}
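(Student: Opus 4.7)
The plan is to proceed by induction on $m$. The base case $m=1$ is trivial. For the inductive step from $m-1$ to $m$, I would condition on the vector $\mathbf{Z} := (X_{1m}, X_{2m}, \ldots, X_{m-1,m})$ recording all outcomes of games between player $m$ and one of the first $m-1$ players. Under this conditioning, the remaining variables $X_{ij}$ with $i,j \in \{1,\ldots,m-1\}\cup\{m+1,\ldots,n\}$ are untouched, independent of $\mathbf{Z}$, and realize a round-robin tournament on $n-1$ players to which the induction hypothesis applies.

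The crucial structural observation is that, conditional on $\mathbf{Z}$, the score $s_m$ is independent of $(s_1,\ldots,s_{m-1})$. Indeed, $s_m = (m-1-S) + \sum_{j>m}X_{mj}$, where $S := \sum_{i<m}X_{im}$ is $\mathbf{Z}$-measurable, while for each $i<m$, $s_i = X_{im} + s_i'$ with $s_i' := \sum_{j\neq i,\, j\neq m} X_{ij}$ a score in the $(n-1)$-player sub-tournament. The pairs underlying $\sum_{j>m}X_{mj}$ are disjoint from those underlying any $s_i'$, so these parts are independent. Applying the inductive hypothesis to the sub-tournament with shifted thresholds $k_i - X_{im}$ yields
\[
P\bigl(s_1<k_1,\ldots,s_{m-1}<k_{m-1}\,\bigm|\, \mathbf{Z}\bigr) \;\leq\; \prod_{i=1}^{m-1} g_i(X_{im}),
\]
where $g_i(x):=P(s_i<k_i\mid X_{im}=x)=P(s_i' < k_i - x)$ is nonincreasing in $x$, while $P(s_m<k_m\mid \mathbf{Z}) = h(S)$ for the nondecreasing function $h(s) := P\bigl(\sum_{j>m}X_{mj} < k_m-(m-1)+s\bigr)$.

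Taking expectations, the joint probability is therefore bounded by $E\bigl[\prod_{i=1}^{m-1}g_i(X_{im})\cdot h(S)\bigr]$, and the induction will close if this is no larger than $\prod_{i=1}^{m-1}E[g_i(X_{im})]\cdot E[h(S)] = \prod_{i=1}^m P(s_i<k_i)$, using the independence of the $X_{im}$'s for the first factor and the tower property for the second. Since the coordinates $X_{1m},\ldots,X_{m-1,m}$ are independent Bernoullis, $\prod_i g_i(X_{im})$ is coordinate-wise nonincreasing and $h(S)$ is coordinate-wise nondecreasing, this is precisely the Harris (product-measure FKG) inequality for oppositely monotone functions of independent variables. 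I expect this final correlation step to be the main subtlety: the purpose of the specific conditioning above is to localize the negative dependence between $s_m$ and the other scores entirely to the scalar monotonicities $x \mapsto g_i(x)$ and $s \mapsto h(s)$ on independent coordinates, after which a single invocation of Harris's inequality completes the induction.
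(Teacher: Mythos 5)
Your argument is correct, and it closes the induction: conditional on $\mathbf{Z}$ the factorization you claim does hold (the pairs $\{m,j\}$, $j>m$, underlying the rest of $s_m$ are disjoint from those underlying the $s_i'$ and from $\mathbf{Z}$ itself), the induction hypothesis legitimately applies to the $(n-1)$-player sub-tournament with the $\mathbf{Z}$-measurable thresholds $k_i - X_{im}$, and the final step is a valid use of Harris's inequality for a coordinatewise nonincreasing and a coordinatewise nondecreasing function of the independent coordinates $X_{1m},\ldots,X_{m-1,m}$, followed by the tower property. However, this is a genuinely different route from the paper's. The paper (in its proof of Theorem~\ref{eq:Lemma2}, which contains Lemma~\ref{eq:Lemma} as a special case) does not induct on the number of scores at all: it decouples one game at a time, replacing the dependent pair $(X_{ij},X_{ji})$ by independent variables $(Y_{ij},Y_{ji})$ with the same marginals, and shows by an explicit computation (Assertions 1 and 2, reducing to $W(g,h)=Q_g(1-Q_{m-h-1})$ or $Q_{m-h-1}(1-Q_g)$, hence $W(g,h)\geq 0$) that each replacement can only increase the joint distribution function; after all $\binom{n}{2}$ replacements the scores are sums of independent variables and the distribution function factors exactly. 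The paper's argument is entirely self-contained and elementary, treats all $n$ scores simultaneously, and is written directly for the multi-valued setting $X_{ij}+X_{ji}=m$; yours is shorter modulo the (standard) Harris/FKG inequality, localizes the negative dependence transparently in the opposite monotonicities of $g_i$ and $h$, and would also extend to the multi-valued setting since nothing in your monotonicity or independence claims uses that the $X_{ij}$ are Bernoulli. One cosmetic point: define $g_i(x)$ directly as $P(s_i'<k_i-x)$ rather than as a conditional probability, to avoid worrying about conditioning on null events when $p_{im}\in\{0,1\}$.
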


A similar inequality for negatively correlated normal random variables appears in \cite{S1962}, for the multinomial random variables in \cite{M1968}, and for other multivariate discrete distributions in \cite{JP1975}.

In particular, \cite{JP1975} showed that if $A$ denotes a measurable event and ${\displaystyle P(A|X=x)}$ is a well-defined nondecreasing function of $x$, then $P\left(A, X\leq a\right)\leq P\left(A\right)P\left(X\leq a\right)$ for every $a$.
Their proof is based on Chebyshev's order inequality.
In order to use their result to prove \eqref{eq:Lemma} in the tournaments setting we would need to verify that
$P\left(s_{j+1}\leq k_{j+1},\ldots,s_n<k_n|s_j=k\right)$ is a nondecreasing function of $k$ for $j=1,\ldots,n-1$,
and it would be a hard task.

\cite{L1966} called two random variables {\it negative quadrant dependent}
if they satisfy \eqref{eq:iH}(see also \cite{N2006}). The random variables which satisfy \eqref{eq:iH} are known as {\it negative lower orthant dependent (NLOD)} and were investigated in \citep{JP1983} and references therein. All this is closely related to the property of {\it negatively associated (NA)} random variables $X_1, X_2,\ldots,X_n$, where for every pair of disjoint subsets $A_1, A_2$ of $\left\{1,2,\ldots,n\right\}$,
${\displaystyle
Cov\left\{f(X_i, i\in A_1), g(X_j, j\in A_ 2)\right\}\leq 0,
}
$
for all nondecreasing functions $f, g$.
NA implies NLOD, but not vise versa \citep{JP1983}.
In addition, \cite{JP1983} show that negatively correlated normal random variables, which are NLOD \citep{S1962}, also are NA.

A recent result was published in July 2021 \citep{R2021}, which is closely related to \eqref{eq:Lemma}.
Ross considered a tournament model where $X_{ij}\sim Bin (n_{ij}, p_{ij})$.
The binomial distribution is log-concave, i.e., for all $u\geq 1$, $(p(u))^2\geq p(u-1)p(u+1),$ where $p(u)=P(X_{ij}=u)$
\citep{JG2006}.
\cite{R2021} (Proposition 1) used a theorem from \cite{E1965} on log-concave distributions to show that in the model
he was studying, ${\displaystyle s_{-i}\,\big |\,s_i=k}$
is stochastically decreasing in $k$, where ${\displaystyle s_{-i}=\left(s_1,\ldots,s_{i-1},s_{i+1},\ldots,s_n\right)}$;
that is, ${\displaystyle E\left(\Phi(s_{i-1})| s_i=k\right)}$ is a non-increasing function of $k$ for any
a real measurable function $\displaystyle{\Phi(x_1,\ldots,x_{n-1})}$
on Euclidean $(n-1)$-space  which is non-decreasing in each of its arguments.
\cite{R2021} (Corollary 2) then deduced that ${\displaystyle s_{-i}\,\big |\,s_i\geq k}$
is stochastically smaller than $s_{-i}$. This implies
that ${\displaystyle I_{-i}\,\big |\,I_i=1}$ is stochastically smaller than $I_{-i}$, where $I_i$ is the indicator function of the event that $s_i>k$, and ${\displaystyle I_{-i}=\left(I_1,\ldots,I_{i-1}, I_{i+1},\ldots,I_{n}\right)}$. Then, it follows from (Proposition 2, \cite{R2016}) that ${\displaystyle P\left(\sum_{i=1}^{n}I_i=0\right)\leq \prod_{i=1}^{n}P\left(I_i=0\right) },$ which is equivalent to $
P\left(s_1\leq k,\ldots,s_n\leq k\right)\leq P\left(s_1\leq k\right)\cdots P\left(s_n\leq k\right)
$.
\cite{JP1983} (Theorem 2.8) gave another result that made use of the work of \cite{E1965}, namely, that
if $X_1,\ldots,X_n$ are independent random variables with log-concave densities then the joint conditional distribution
of $X_1,\ldots,X_n$ given $\sum_{i=1}^{n}X_i$ is NA. Using this result, they show that the multinomial random variables are NA and therefore NLOD.

\cite{M2021a, M2021b} considered the chess round-robin tournament model (see Example \ref{eq:Chess} below)
and found an asymptotic distribution of  $s_{(i)}^{*}, i=1,\ldots,n$.

In this work, we extend Huber's lemma to a large class of discrete distributions of $X_{ij}$ and, as a byproduct, show that this extension implies convergence in probability of the normalized maximal score for generalizations of round-robin tournaments.

\section{Main Results}
Suppose that $n (\geq 2)$ players participate in a generalized round--robin tournament and that each player is compared with each of the other $n - 1$ players (one or more times); and that as a result of the comparison(s) between players $i$ and $j$, these players receive $X_{ij}$ and $X_{ji}$ points, respectively, where $X_{ij}$ and $X_{ji}$ range over the integers $0, 1,\ldots, m$, for some fixed positive integer $m$, and $X_{ij} + X_{ji} = m$. We further assume that for each given ordered pair of distinct integers $(i, j)$, $1\leq i, j\leq n$, there exist nonnegative numbers $p_0,\ldots,p_m$
such that $p_0+\cdots+p_m=1$ and
\begin{equation}
p_u=P(X_{ij} = u)=P(X_{ij} = u, X_{ji}=m - u)=P( X_{ji} = m - u)
\tag{L}\label{eq:L}
\end{equation}
for $0\leq u \leq m$.

Notice that it follows from assumption \eqref{eq:L} and the fact that $X_{ij}+X_{ji}=m$ that $E(X_{ij}) + E(X_{ji})=m$ and $Var(X_{ij})=Var(X_{ji})$.

\begin{thm}
\label{eq:Lemma2}
If the probabilities associated with a generalized tournament satisfy condition \eqref{eq:L}, then for any fixed nonnegative integers $k_1,\ldots, k_n$, the joint distribution function $F(k_1,\ldots,k_n)$ of the
scores $s_1,\ldots,s_n$ satisfies the relation
\begin{equation}
\label{eq:M}
F\left(k_1,\ldots, k_n\right)=P\left(s_1\leq k_1,\cdots, s_n\leq k_n\right)\leq P\left(s_1\leq k_1\right)\cdots P\left(s_n \leq k_n\right).
\end{equation}

\end{thm}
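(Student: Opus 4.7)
\medskip
\noindent\textbf{Proof proposal.} My plan is to induct on $n$, using a well-chosen conditioning that reduces the intra-tournament dependence coming from $X_{ij}+X_{ji}=m$ to a negative-correlation (Harris/FKG-type) inequality for monotone functions of independent random variables.

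The base case $n=2$ is direct: with $s_1=X_{12}$ and $s_2=m-X_{12}$, the inequality \eqref{eq:M} becomes
\[
P\bigl(X_{12}\le k_1,\ X_{12}\ge m-k_2\bigr)\ \le\ P(X_{12}\le k_1)\,P(X_{12}\ge m-k_2),
\]
which is the elementary observation that indicators of an upper and a lower threshold applied to a single random variable are negatively correlated.

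For the inductive step, assume \eqref{eq:M} holds for tournaments on $n-1$ players and condition on the outcomes $Y_j:=X_{jn}$ of the games involving player~$n$. Let $s_i^{(n)}:=\sum_{j\ne i,\,j\ne n} X_{ij}$ for $i<n$. The subtournament on $\{1,\ldots,n-1\}$ again satisfies \eqref{eq:L}; the $Y_j$ are i.i.d.\ with law $p$ (by \eqref{eq:L}) and independent of the subtournament; and $s_i=s_i^{(n)}+Y_i$ for $i<n$, while $s_n=(n-1)m-\sum_j Y_j$. Conditioning on $(Y_1,\ldots,Y_{n-1})$ and applying the inductive hypothesis to the subtournament probability yields
\[
F(k_1,\ldots,k_n)\ \le\ E\!\left[\mathbf{1}\!\left\{\textstyle\sum_{j=1}^{n-1} Y_j\ \ge\ (n-1)m-k_n\right\}\prod_{i=1}^{n-1} f_i(Y_i)\right],
\]
where $f_i(y):=P(s_i^{(n)}\le k_i-y)$ is non-increasing in $y$.

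The remaining step, which I see as the technical crux, is to factor this expectation. The indicator is coordinate-wise non-decreasing in $(Y_1,\ldots,Y_{n-1})$ while $\prod_i f_i(Y_i)$ is coordinate-wise non-increasing; since the $Y_j$ are independent, Harris's inequality (equivalently, the Chebyshev correlation inequality for product measures) gives non-positive covariance. Combining this with independence to split the product, and using $E[f_i(Y_i)]=P(s_i^{(n)}+Y_i\le k_i)=P(s_i\le k_i)$, one obtains
\[
F(k_1,\ldots,k_n)\ \le\ P(s_n\le k_n)\prod_{i=1}^{n-1} P(s_i\le k_i)\ =\ \prod_{i=1}^{n} P(s_i\le k_i),
\]
completing the induction. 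The conceptual punchline is that conditioning on player~$n$'s games converts the pairwise dependence $X_{ij}+X_{ji}=m$ into a clean negative-correlation statement for oppositely monotone functions of independent coordinates.
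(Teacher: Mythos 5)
Your proof is correct, but it follows a genuinely different route from the paper's. The paper decouples one \emph{game} at a time: it replaces the dependent pair $(X_{12},X_{21})$ by an independent pair $(Y_{12},Y_{21})$ with the same marginals, proves $F\leq F_1$ by an explicit computation (rewriting $F_1-F$ as $\sum_{g,h}R(g,h)W(g,h)$ and showing by case analysis that $W(g,h)$ equals $Q_g(1-Q_{m-h-1})$ or $Q_{m-h-1}(1-Q_g)$, hence is nonnegative), and then iterates over all $\binom{n}{2}$ pairs until the joint distribution factors. You instead decouple one \emph{player} at a time: induction on $n$, conditioning on player $n$'s games $(Y_1,\ldots,Y_{n-1})$, applying the inductive hypothesis to the subtournament, and then invoking Harris's (FKG, product-measure) inequality to separate the non-decreasing indicator $\mathbf{1}\{\sum_j Y_j\geq (n-1)m-k_n\}$ from the non-increasing product $\prod_i f_i(Y_i)$. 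Your argument is shorter and more conceptual --- it isolates the antithetic structure $X_{ji}=m-X_{ij}$ as the sole source of negative dependence and reduces everything to a standard correlation inequality, and it extends with no change to the heterogeneous settings of the paper's Comment 1 and even to non-integer scores; the paper's argument is longer but entirely self-contained and elementary, producing the explicit monotone chain $F\leq F_1\leq\cdots\leq F_{n(n-1)/2}$. Two small points you should make explicit: the inductive hypothesis is applied with thresholds $k_i-y_i$ that may be negative, which is harmless because the left-hand side is then zero; and Harris's inequality must be stated for independent, not necessarily identically distributed variables with one coordinate-wise non-decreasing and one coordinate-wise non-increasing (nonnegative) function, which is standard but should be cited or proved (e.g., by induction on coordinates from the one-dimensional Chebyshev correlation inequality, the same tool your $n=2$ base case uses).
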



\begin{proof}
Any two particular scores $s_1$ and $s_2$, say, can be rewritten as
$s_1 =  s^{'}_1 + X_{12}$ and  $s_2 = s^{'}_2 + X_{21}$, where
$s^{'}_1 = \sum_{j \ne 1,2}X_{1j}$ and $s^{'}_2=\sum_{j \ne 2,1}X_{2j}$.
So the expression for $F(k_1,\ldots,k_n)$ can be rewritten as
\begin{equation}
\label{eq:M2}
F:=F\left(k_1,\ldots, k_n\right)=P(s'_1 + X_{12} \leq k_1, s'_2 + X_{21}\leq k_2, s_3\leq k_3,\ldots, s_n\leq k_n).
\end{equation}

We now replace the dependent variables $X_{12}$ and $X_{21}$ by independent variables $Y_{12}$ and $Y_{21}$ such that
${\displaystyle P(Y_{12} = u) = p_u}$ and ${\displaystyle P(Y_{21} = v) = p_{m-v}}$ for $0\leq u,v\leq m,$
but we do not require that  $Y_{12} + Y_{21} = m$.
This gives a new joint distribution function
\begin{equation}
\label{eq:M3}
F_1:=F_1\left(k_1,\ldots, k_n\right)=P(s^{'}_1 + Y_{12} \leq k_1, s^{'}_2 + Y_{21}\leq k_2, s_3\leq k_3,\ldots, s_n\leq k_n).
\end{equation}
For notational convenience we shall temporarily suppress  the $s_i\leq k_i$ terms for $3\leq i\leq n$ in relations \eqref{eq:M2} and \eqref{eq:M3}
and in what follows. When we subtract $F$ from $F_1$ and then sum over the possible values of $Y_{12}, Y_{21}, X_{12}$, and
$X_{21} = m - X_{12}$, bearing in mind that these variables are independent of the other variables, we find that

\begin{align}
\label{eq:M4}
&
F_1 - F=P(s^{'}_1\leq k_1 - Y_{12}, s^{'}_2\leq k_2 - Y_{21}) - P( s^{'}_1\leq k _1 - X_{12}, s^{'}_2\leq k_2 - X_{21}) \nonumber\\
&
=
\sum_{u=0}^{m} \sum_{v=0}^{m}
P(s^{'}_1\leq k_1 - u, s^{'}_2\leq k_2 - v) \left\{P(Y_{12} = u, Y_{21} = v) - P(X_{12} = u, X_{21} = v)\right\}\nonumber\\
&
=\sum_{u=0}^{\min{(m, k_1)}} \sum_{v=0}^{\min{(m, k_2)}}
P(s^{'}_1\leq k_1 - u, s^{'}_2\leq k_2 - v) p(u,v),
\end{align}
where
\begin{equation}
\label{eq:M6}
p(u,v)=P(Y_{12} = u, Y_{21} = v) -  P(X_{12} = u, X_{21} = v)=
\left\{
\begin{array}{ccc}
  p_u p_{m-v} & if & u+v \neq m \\
  p_u^2 -  p_u & if & u+v=m,
\end{array}
\right.
\end{equation}
and where we have appealed to assumption $\eqref{eq:L}$ at the last step.

We want to show that
\begin{equation}
F_1 - F \geq 0.
\end{equation}

To establish this, we need to introduce some more notation and Assertions 1 and 2 (with proofs in Appendix A and B) in order to obtain a simpler form of relation \eqref{eq:M4}.

We let
\begin{equation}
\label{eq:M8}
R(g, h)=P( s^{'}_1=k_1 -g,  s^{'}_2=k_2-h)
\end{equation}
and
\begin{equation}
\label{eq:M9}
W(g,h)=\sum_{u=0}^{\min(m,g)}\sum_{v=0}^{\min(m,h)}p(u,v)
\end{equation}
for  $0\leq g\leq k_1$   and  $0\leq h \leq k_2$.
\bigskip

\noindent
{\bf Assertion 1}.
${\displaystyle
F_1-F=\sum_{g=0}^{k_1}\sum_{h=0}^{k_2} R(g,h)W(g,h).
}$
\medskip

\noindent
{\bf Assertion 2}.
If $0\leq g \leq k_1$ and  $0\leq h\leq k_2$, then
                                           $W(g,h)\geq 0$.
\bigskip

Since $R(g,h)$ and $W(g,h)$ are each nonnegative, by definition and by Assertion $2$, it follows from the relation in
Assertion $1$ that  $F_1 - F\geq 0$, as required.
To complete the proof of Theorem \ref{eq:Lemma2}, we proceed as follows. Suppose the pairs $\left\{{(i,j): 1\leq i < j \leq n}\right\}$ are
lexicographically ordered and labelled from 1 to $n(n - 1)/2$. We defined $F_1$ as the distribution function obtained from
the distribution function $F$ by replacing the dependent variables $X_{12}$ and $X_{21}$ by the independent variables $Y_{12}$
and $Y_{21}$; where  $Y_{12}$ and $Y_{21}$ have the same distribution as $X_{12}$ and $X_{21}$  except that we do not require that
$Y_{12}  + Y_{21} = m$.  Similarly, if $1 < t\leq n(n-1)/2$ and the pair $(i,j)$ has label $t$, then $F_t$ is defined to be the distribution function
obtained from the function $F_{t - 1}$ by replacing the dependent variables $X_{ij}$ and $X_{ji}$ by the independent variables
$Y_{ij}$ and $Y_{ji}$; where  $Y_{ij}$ and $Y_{ji}$ have the same distribution as $X_{ij}$ and $X_{ji}$ except that we do not require that
$Y_{ij} + Y_{ji} = m$. The conclusion that $F_{t -1}\leq F_t$ for $1 < t\leq n(n-1)/2$ follows by essentially the same type of argument as
was used to show that $F\leq F_1$. When we combine these inequalities, we find that
\begin{align*}
&
F(k_1,\ldots,k_n)=F\leq F_1\leq F_2\leq \cdots \leq F_{n(n-1)/2}\\
&
= P\left(\sum_{j \ne 1}Y_{1j}\leq k_1,\ldots,\sum_{j \ne n}Y_{nj}\leq k_n\right)= P\left(\sum_{j \ne 1}Y_{1j}\leq k_1\right)\cdots P\left(\sum_{j \ne n}Y_{nj}\leq k_n\right)\\
&
= P\left(\sum_{j \ne 1}X_{1j}\leq k_1\right)\cdots P\left(\sum_{j \ne n}X_{nj}\leq k_n\right)=P(s_1\leq k_1)\cdots P(s_n\leq k_n),
\end{align*}
since the variables $Y_{ij}$ are independent and the variables $\sum_{j \ne i}Y_{ij}$ and $\sum_{j \ne i}X_{ij}$ have the same distribution for
each $i$. This completes the proof of the Theorem \ref{eq:Lemma2}.
\end{proof}

Put $\sigma^2_i(n-1)=\sum_{j=1, j\ne i}^{n}Var(X_{ij})$ for $i=1,\ldots,n$.
\begin{thm}
\label{eq:HH}
If the probabilities associated with a generalized tournament satisfy condition \eqref{eq:L} and, for $i=1,\ldots,n$,
$\sigma_i(n-1)\rightarrow \infty$ as $n\rightarrow \infty$, then $s_{(n)}^{*}-\sqrt{2\log(n-1)}\rightarrow 0$ in probability.
\end{thm}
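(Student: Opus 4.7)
\textbf{Proof proposal for Theorem \ref{eq:HH}.}

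The plan is to run the standard two-sided argument for the maximum of asymptotically normal summands, using Theorem \ref{eq:Lemma2} wherever a naive independence bound would otherwise be needed. Write $a_n=\sqrt{2\log(n-1)}$. For any fixed $\epsilon>0$, the event $\{|s_{(n)}^{*}-a_n|>\epsilon\}$ splits into an upper-tail piece $\{s_{(n)}^{*}\geq a_n+\epsilon\}$ and a lower-tail piece $\{s_{(n)}^{*}\leq a_n-\epsilon\}$; it suffices to show both probabilities tend to $0$.

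For the upper tail, a union bound gives
\begin{equation*}
P(s_{(n)}^{*}\geq a_n+\epsilon)\leq \sum_{i=1}^{n}P(s_i^{*}\geq a_n+\epsilon).
\end{equation*}
Because each $X_{ij}$ is bounded in $\{0,1,\ldots,m\}$ and $\sigma_i(n-1)\to\infty$, the normalized sum $s_i^{*}$ obeys the Lyapunov central limit theorem, and, more importantly, obeys Cram\'er-type moderate deviation bounds valid in the range $x=o(\sigma_i(n-1)^{1/6})$. Since $a_n\sim\sqrt{2\log n}$ is well inside this range, one gets $P(s_i^{*}\geq a_n+\epsilon)=(1-\Phi(a_n+\epsilon))(1+o(1))$; using the Mills ratio asymptotic $1-\Phi(x)\sim\phi(x)/x$ this is $o(1/n)$, so the sum vanishes.

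For the lower tail, invoke Theorem \ref{eq:Lemma2}. Condition \eqref{eq:L} holds by hypothesis, so, after translating the normalized strict inequalities $s_i^{*}< a_n-\epsilon$ into the equivalent integer-level inequalities $s_i\leq k_i^{(n)}$ with $k_i^{(n)}=\lfloor E(s_i)+\sigma_i(n-1)(a_n-\epsilon)\rfloor$,
\begin{equation*}
P(s_{(n)}^{*}\leq a_n-\epsilon)=P(s_1\leq k_1^{(n)},\ldots,s_n\leq k_n^{(n)})\leq \prod_{i=1}^{n}P(s_i\leq k_i^{(n)}).
\end{equation*}
Applying $1-x\leq e^{-x}$ bounds the product by $\exp\bigl(-\sum_{i=1}^{n}P(s_i^{*}\geq a_n-\epsilon)\bigr)$. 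The same moderate-deviation estimate as above provides a matching lower bound $P(s_i^{*}\geq a_n-\epsilon)\geq c\,e^{\epsilon a_n}/(n-1)\sqrt{\log(n-1)}$, which when summed over $i$ grows without bound, so the exponential collapses to $0$.

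The mechanical part of this argument --- the union bound on one side and the product bound on the other --- is essentially forced once Theorem \ref{eq:Lemma2} is available. The real obstacle is that the threshold $a_n$ is not fixed but drifts to infinity at rate $\sqrt{\log n}$, so the plain CLT does not deliver the tail probability with enough precision on either side. What is actually needed is a uniform moderate-deviation estimate of Cram\'er--Petrov type for sums of bounded independent integer summands; the Lyapunov ratio $m^{3}/\sigma_i(n-1)$ tends to $0$ under the hypothesis $\sigma_i(n-1)\to\infty$, which places us comfortably in the regime where such estimates are available, and this is the one place where the boundedness built into \eqref{eq:L} is genuinely used.
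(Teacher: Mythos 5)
Your proposal is correct and follows essentially the same route as the paper's proof: a union bound combined with a Cram\'er--Feller moderate-deviation estimate and the Mills ratio for the upper tail, and Theorem \ref{eq:Lemma2} together with $1-x\leq e^{-x}$ for the lower tail. The only difference is cosmetic --- you use the additive thresholds $a_n\pm\epsilon$ where the paper uses $x_{n-1}^{\pm}=[2\log(n-1)-(1\pm\epsilon)\log\log(n-1)]^{1/2}$, which yields a slightly sharper localization but the same conclusion; both versions also lean on the same implicit assumption that $\sigma_i(n-1)$ grows fast enough for $\sqrt{2\log(n-1)}$ to lie in the valid moderate-deviation range.
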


\begin{proof}
The proof differs from the arguments used by \cite{H1963} to establish Result 1 in two ways. (i) Instead of using the large deviation result for Bernoulli random variables (stated on p.193 of the 3rd edition of \cite{F1968}), we use Cram\'{e}r-type large deviation results for independent non-identically distributed random variables, as stated below; (ii) we use our Theorem \ref{eq:Lemma}, a stronger form of the Lemma 1 that Huber used.

For each fixed $i$ the random variables $X_{ij}, j\neq i$, are independent, but not necessarily identically distributed, such that $0 \leq X_{ij}\leq m$ and $m$ is finite. Hence, it follows from \cite{F1971}(p. 553, Theorem 3) that
\begin{equation}
\label{eq:F}
P(s_i^{*}>x_{n-1})\sim 1-\Phi(x_{n-1}),
\end{equation}
provided that ${\displaystyle x_{n-1}=o\left((\sigma_i(n-1))^{1/3}\right)}$,
${\displaystyle \frac{x_{n-1}^{3}}{\sigma_i(n)}\rightarrow 0}$ and $\sigma_i(n-1)\rightarrow \infty$, as $n\rightarrow \infty$,
where $\Phi()$ is the CDF of a standard normal variable.

It is well known (see for example, Lemma 2, p. 175 \cite{F1968}) that as $x_{n-1}\rightarrow \infty$,
\begin{equation}
\label{eq:MR}
1-\Phi(x_{n-1})\sim \frac{1}{x_{n-1}}\varphi(x_{n-1}),
\end{equation}
where $\varphi()$ is the PDF of a standard normal random variable.

Let $\epsilon>0$ and put
\begin{equation}
\label{eq:Ch}
x_{n-1}^{\pm}=[2\log(n-1)-(1 \pm \epsilon)\log(\log(n-1))]^{1/2}.
\end{equation}
Combining \eqref{eq:F} and \eqref{eq:MR}, we obtain
\begin{equation*}
P(s_i^{*}>x_{n-1}^{\pm})\sim \frac{(\log(n-1))^{\pm \epsilon/2}}{\sqrt{4\pi}(n-1).}
\end{equation*}
Then, choosing $c^{''}$ and $c^{'}$ such that $c^{''}<\frac{1}{\sqrt{4\pi}}<c^{'}$,
we find that for all sufficiently large $n$
\begin{equation}
\label{eq:LHS}
P(s_{(n)}^{*}>x_{n-1}^{-})\leq \sum_{i=1}^{n}P(s_{i}^{*}>x_{n-1}^{-})<n c^{'}\frac{(\log(n-1))^{-\epsilon/2}}{n-1}.
\end{equation}

Using Theorem \ref{eq:Lemma2} we find that for sufficient large $n$
\begin{align}
\label{eq:RHS}
&
P(s_{(n)}^{*}\leq x_{n-1}^{+})\leq \prod_{i=1}^{n}P(s_{i}^{*}\leq x_{n-1}^{+})<
\left[1-c^{''}\frac{(\log(n-1))^{\epsilon/2}}{n-1}\right]^n\leq e^{-c^{''}n\frac{(\log(n-1))^{\epsilon/2}}{n-1}}.
\end{align}
Theorem \ref{eq:HH} now follows from \eqref{eq:LHS} and \eqref{eq:RHS}.
\end{proof}

\begin{com}
It is not difficult to see that the proofs of Theorem \ref{eq:Lemma2} and \ref{eq:HH}  still go through in the following more general situations:   when
\begin{enumerate}
\item[(a)] the probabilities that appear in condition \eqref{eq:L} can take on  different values  $(p_u)^{(i,j)}$ for different ordered pairs (i, j); and
\item[(b)]the integer $m$ that appears in condition \eqref{eq:L} and the relation $X_{ij}+X_{ji}=m$  can take on  different values  $m_{ij}=m_{ji}$ for different unordered pairs $(i, j)$.
\end{enumerate}
\end{com}

\begin{cor}
\label{eq:cor1}
If $E(s_1)=\cdots=E(s_n)$, $Var(s_1)=\cdots=Var(s_n)$, then under the conditions of Theorem \ref{eq:HH},
we have $(s_{(n)}-E(s_1))/\sqrt{Var(s_1)}-\sqrt{2\log(n-1)}\rightarrow 0$ in probability as $n\rightarrow \infty$.
\end{cor}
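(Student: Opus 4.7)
The plan is to observe that the equal-mean and equal-variance hypothesis makes the player-wise normalization a single monotone affine transformation applied uniformly to every score, so that order statistics commute with it. First I would set $\mu := E(s_1) = \cdots = E(s_n)$ and $\sigma^2 := \mathrm{Var}(s_1) = \cdots = \mathrm{Var}(s_n)$. By the definition of the normalized scores given in the introduction, $s_i^{*} = (s_i - \mu)/\sigma$ for each $i=1,\ldots,n$.

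Next I would invoke the fact that an increasing affine map $x \mapsto (x-\mu)/\sigma$ preserves order: for any realization, if $s_{\pi(1)} \leq s_{\pi(2)} \leq \cdots \leq s_{\pi(n)}$ is the sorted sequence of raw scores (with $\pi$ the order permutation), then applying the common transformation yields $s_{\pi(1)}^{*} \leq \cdots \leq s_{\pi(n)}^{*}$, which is the sorted sequence of normalized scores. In particular, $s_{(n)}^{*} = (s_{(n)} - \mu)/\sigma$ almost surely.

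Finally, the hypotheses of Theorem \ref{eq:HH} are assumed to hold, so $s_{(n)}^{*} - \sqrt{2\log(n-1)} \to 0$ in probability. Substituting the identity above gives
\begin{equation*}
\frac{s_{(n)} - E(s_1)}{\sqrt{\mathrm{Var}(s_1)}} - \sqrt{2\log(n-1)} \longrightarrow 0
\end{equation*}
in probability, which is exactly the claim of the corollary.

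There is no substantive obstacle here: the corollary is simply a re-expression of Theorem \ref{eq:HH} in the raw-score scale, and the only thing to check is that order statistics commute with the common affine normalization, which holds because all players share the same centering and scaling constants. Without the equal-moments hypothesis this step would fail, since different $s_i$ would be normalized by different constants and $s_{(n)}^{*}$ would no longer coincide with the normalization of $s_{(n)}$.
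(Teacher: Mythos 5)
Your proposal is correct and matches the paper's intent: the paper states the corollary without proof, treating it as an immediate consequence of Theorem \ref{eq:HH}, and your argument---that under equal means and variances the normalization is a single increasing affine map common to all players, so $s_{(n)}^{*}=(s_{(n)}-E(s_1))/\sqrt{Var(s_1)}$---is exactly the step that makes this immediate.
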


\begin{com}
The assumptions in Corollary \ref{eq:cor1} imply that in our model
$E(s_i)=E(s_j)=\frac{1}{2}\text{(maximum total score possible)}$
and $Var(s_i)=Var(s_j)$; whereas in Huber's model, with no outlier, $E(X_{ij})=1/2, Var(X_{ij})=1/4$ for all $i \neq j$.
\end{com}

\section{Examples}
We present a few examples where the assumptions of Corollary \ref{eq:cor1} are satisfied.
\begin{ex}[Uniform distribution] If $p_{u}=\frac{1}{m+1}$, for $u=0,1,\ldots,m$, and if $n\rightarrow \infty$, then
$$s_{(n)}-\left\{\frac{(n-1)m}{2}+\sqrt{\frac{(n-1)(\log(n-1)){m(m+2)}}{6}}\right\}\rightarrow 0$$ in probability.
\end{ex}

\begin{ex}[Symmetric Binomial distribution] If $X_{ij}\sim Bin (m, p=1/2)$, and if $n\rightarrow \infty$, then
$$s_{(n)}-\left\{\frac{(n-1)m}{2}+\sqrt{\frac{(n-1)(\log(n-1)){m}}{2}}\right\}\rightarrow 0$$ in probability.
\end{ex}

\begin{ex}[Chess round-robin tournament with draws]
\label{eq:Chess}
In this case $X_{ij}$, the score of player $i$ after the game with player $j, j\neq i$,
equals $1, 1/2$, or $0$ accordingly as player $i$ wins, draws, or loses the game against player $j$.
Therefore, ${\displaystyle X_{ij}+X_{ji}=1}, i\neq j$. For any $i\neq j$ let $p=P\left(X_{ij}=1/2\right)$,
and assume $P\left(X_{ij}=1\right)=P\left(X_{ij}=0\right)$.
In this case we find that  $E(s_1)=\frac{n-1}{2}, Var(s_1)=\frac{(n-1)(1-p)}{4}$, and
if $n\rightarrow \infty$, then $$s_{(n)}-\left\{\frac{n-1}{2}+\sqrt{\frac{(n-1)(\log(n-1)){(1-p)}}{2}}\right\}\rightarrow 0$$ in probability.
\end{ex}

\begin{ex}[Non-identically distributed scores]
Suppose $n = 2k+1$ and let $p_1,\ldots,p_k$ denote arbitrary probabilities (not equal to $0$ or $1$). Suppose points labelled $1, 2,\ldots, n$ (corresponding to the $n$ players) are arranged around the circumference of a circle in that order--so it makes sense to talk of one point being the successor or predecessor of another point etc.
Consider three vertices labelled $h$, $i$, and $j$ where $1\leq h, i, j \leq n$ and $h + d = i$ and $i + d = j$ for some $d$ such that $1\leq d\leq k$; we reduce labels modulo $n$ when necessary. For each such triple, let
\begin{align}
&
P(X_{hi}=u)= P(X_{ij} = u) = C(m, u)(p_d)^{u}( 1 - p_d)^{m - u},\nonumber
\\
&
P(X_{ih} = u) = P(X_{ji} = u) = C(m, u)( 1 - p_d)^{ u}  (p_d)^{m - u},
\end{align}
where $C(m, u)$ denotes the binomial coefficient $m$-choose-$u$.

Then
\begin{align}
&
E(X_{ij}) + E(X_{ih})=mp_d + m(1 - p_d)=m,\,\,\,Var(X_{ij}) + Var(X_{ih})=2mp_d(1 - p_d).
\end{align}

Consequently,
\begin{align}
&
E(s_i) = \sum_{j \ne i} E(X_{ij}) = m(n - 1)/2,  Var(s_i) = \sum_{j \ne i}Var(X_{ij}) = 2m( v_1 +\cdots+ v_k)
\end{align}
for all $i$, where $v_d = p_d(1 - p_d), d=1,\ldots,k$;
and if $n\rightarrow \infty$, then $$s_{(n)}-\left\{\frac{m(n-1)}{2}+2\sqrt{(\log(n-1)m(v_1 +\cdots+ v_k)}\right\}\rightarrow 0$$ in probability.

When $n = 2k$, we proceed essentially as before as far as the probabilities  $p_d$  are concerned, for $1\leq d\leq k - 1$. For the remaining case, consider diametrically opposite points  $i$ and $j$, where $1\leq i\leq k$ and  $k + 1\leq j\leq n$:
if $j = i + k$, let  $P(X_{ij}=u) = C(m, u)(1/2)^m$; and if  $i= j + k$ let $P(X_{ji} = u) = C(m, u)(1/2)^m$.
Consequently,
\begin{align}
&
E(s_i) = m(k - 1 + 1/2) = m(n - 1)/2,\,\,\, Var(s_i) = m(2(v_1 + ....+v_{k - 1})+ 1/4)
\end{align}
for all $i$;
and if $n\rightarrow \infty$, then $$s_{(n)}-\left\{\frac{m(n-1)}{2}+2\sqrt{(\log(n-1)m(v_1 +\cdots+ v_{k-1}+1/8)}\right\}\rightarrow 0$$ in probability.
\end{ex}

\begin{ex}[Non-identically distributed scores  with two different values of $m$, $m_w$ and $m_b$]
Let  $n = 3k$ and suppose the $n$ competitors are split into three classes (1), (2), and (3) of $k$ competitors each.
Let $p$ and $q$ denote two constants such that $0 < q < p < 1$. Let $i$ and $j$ denote any two competitors.
If $i$ and $j$ belong to the same class -(1), (2), or (3)-- and $i < j$,  let
$P(X_{ij} = u) = C(m_{w}, u)(1/2)^{m_{w}} = P(X_{ji}=m_{w} - u), u=0,1,\ldots, m_{w}$
If $i$ belongs to class (h)  and $j$ belongs to class h+1, where $1 \leq h  \leq 3$ and $h + 1$ is reduced modulo $3$, if necessary, then $P(X_{ij} = u) = C(m_{b}, u)(p)^{u}(q)^{m_{b} - u} =P(X_{ji} = m_{b} - u), u=0,1,\ldots,m_{b}$.
In this case we find that
\begin{align}
&
E(s_i) = (k - 1)m_{w}/2 + km_{b},\,\,
Var(s_i) = (k-1)m_{w}/4+2km_{b}pq;
\end{align}
and if $n\rightarrow \infty$, then $$s_{(n)}-\left\{(k - 1)m_{w}/2 + km_{b}+\sqrt{2\log(n-1)\left((k-1)m_{w}/4+2km_{b}pq\right)}\right\}\rightarrow 0$$ in probability.
\end{ex}

\begin{remark}
Suppose the outcomes of the competitions depend primarily upon three attributes of the participants: strength, speed,  and experience. and suppose that participants in classes (1), (2), and (3) excel in strength, speed, and experience, respectively. Our assumptions are that strong players have an advantage when competing against fast players but are at a disadvantage when competing against more experienced players, and similarly for the other combinations. And that players that excel in the same attribute are equally likely to win when competing against each other. Thus the situation here is somewhat similar to that arising in the scissors, paper, stone  game except that we have introduced probabilities instead of certainties  for the outcomes here.
\end{remark}

\begin{ex}
If $m = 2k$, let $p_u = p_{m - u} = (u + 1)/(k + 1)^2$ for $u = 0,1,\ldots,k$.
Then $E(X_{ij}) = m/2$ and $Var(X_{ij})=m(m+4)/24$ for all distinct $i$ and $j$.
Hence $E(s_i) = m(n - 1)/2$ and $Var(s_i)=(n - 1)m(m+4)/24$ for all $i$;  and if $n\rightarrow \infty$, then
$$s_{(n)}-\left\{\frac{(n-1)m}{2}+\sqrt{\frac{(n-1)(\log(n-1)){m}{(m+4)}}{12}}\right\}\rightarrow 0$$ in probability.
\end{ex}

\begin{ex}
If $m = 4$,  let $p_0=p_4=L^3$,  $p_1=p_3=L^2$,  and $p_2 = L$  where  $L=.4406197\ldots$ is the positive root of the equation $L + 2L^2 + 2L^3 = 1.$
Then $E(X_{ij})=L^2 +2L+3L^2 + 4L^3=4(L/2 + L^2 + L^3)=4/2=2$  and  $Var(X_{ij}) = 2(L^2 + 4L^3) = 1.0726468\ldots$
for all distinct  i and j. Hence, $E(s_i) = 2(n - 1)$  and $Var(s_i))=1.0726468(n - 1)$ for all $i$; and if $n\rightarrow \infty$, then
$$s_{(n)}-\left\{2(n-1)+\sqrt{2.1452936(n-1)(\log(n-1))}\right\}\rightarrow 0$$ in probability.
\end{ex}

\section*{Acknowledgements}
We would like to thank the Editor, Associate Editor and referee for the insightful and helpful comments that led to significant improvements in the paper.
YM thanks Abram Kagan for describing a score issue in chess round-robin tournaments with draws.
The research of YM was supported by grant no. 2020063 from the
United States--Israel Binational Science Foundation (BSF), Jerusalem, Israel.

\section*{Appendix}
\appendix
\section{Proof of Assertion 1}
\begin{proof}
When we apply definitions \eqref{eq:M8}, and \eqref{eq:M9} to the last expression in relation \eqref{eq:M4}, we find that
\begin{align}
&
F_1-F=\sum_{u=0}^{\min{(m, k_1)}} \sum_{v=0}^{\min{(m, k_2)}} p(u,v) P(s^{'}_1\leq k_1 - u, s^{'}_2\leq k_2 - v)\nonumber\\
&
=\sum_{u=0}^{\min{(m, k_1)}} \sum_{v=0}^{\min{(m, k_2)}} p(u,v) \sum_{g=u}^{k_1} \sum_{h=v}^{k_2} R(g,h)\nonumber\\
&
=\sum_{g=0}^{k_1} \sum_{h=0}^{k_2} R(g,h)\sum_{u=0}^{\min(m,g)}\sum_{v=0}^{\min(m,h)}p(u,v)\nonumber\\
&
=\sum_{g=0}^{k_1} \sum_{h=0}^{k_2} R(g,h) W(g,h),
\end{align}
as required.
\end{proof}

\section{Proof of Assertion 2}
\begin{proof}
Let
\begin{equation}
Q_u = p_0 +\ldots+ p_u
\end{equation}
for  $0\leq u\leq m$, where we adopt the convention that $Q_{-1} = 0.$
\smallskip

We consider various cases separately.
\smallskip

\noindent
$Case (i)$. $0\leq g, h\leq m$  and  $0\leq g + h < m$.

If  $0\leq u\leq g$  and $0\leq v \leq h$, then $u + v \leq g + h < m$, so $p(u,v) = p_up_{m-v}$ by relation \eqref{eq:M6}. Consequently,
$$W(g, h) = \sum_{u=0}^g \sum_{v=0}^h p_u p_{m-v}=(p_0 +\cdots+p_g)(p_m +p_{m-1}+\cdots+p_{m-h}) = Q_g(1-Q_{m-h-1})\geq 0.$$

\noindent
$Case (ii).$ $0\leq g, h \leq m$  and  $g + h\geq m.$

In this case it follows from relation \eqref{eq:M6} that
                                   $$W(g,h) = Q_g(1-Q_{m-h-1}) - {\sum}' p_t$$

\noindent
where the sum is, in effect, over all pairs $(t, m - t)$ such that $m - h \leq t \leq g$; hence,

\begin{align*}
&
W(g, h) =Q_g(1-Q_{m-h-1}) - (p_{m - h} +\cdots+ p_g)\\
&
= Q_g(1-Q_{m-h-1})-(Q_g-Q_{m-h-1})=Q_{m-h-1}(1 - Q_g)\geq 0.
\end{align*}

Notice, in particular, that
\begin{equation}
\label{eq:M13}
W(m,h) = 0\,\,\,\,\,\text{for}\,\,\,\,\, 0\leq h\leq m\,\,\,\,\, \text{and}\,\,\,\,\,\, W(g, m)=0\,\,\,\,\,\, \text{for}\,\,\,\,\,
0\leq g \leq m.
\end{equation}

We also point out that if $0\leq g,h<m$, then $W(g, h)=W(m-1-h, m-1-g)$.
\smallskip

\noindent
$Case (iii).$ $0\leq h\leq m<g\leq k_1$, or $0\leq g\leq m < h\leq k_2$, or  $m < g\leq k_1$ and  $m < h\leq k_2$.

If  $0\leq h \leq m < g \leq k_1$, then it follows from \eqref{eq:M9} and \eqref{eq:M13} that

                 $$W(g,h) = \sum_{u=0}^m \sum_{v=0}^h p(u,v) = W(m,h) = 0.$$
\noindent
If  $0\leq g\leq m < h\leq k_2$, then it follows from \eqref{eq:M9} and \eqref{eq:M13} that

                 $$W(g,h) = \sum_{u=0}^g \sum_{v=0}^m p(u,v) = W(g,m) = 0.$$
\noindent
If  $m < g\leq k_1$ and  $m < h\leq k_2$, then it follows from \eqref{eq:M9} and \eqref{eq:M13} that

                 $$W(g,h) = \sum_{u=0}^m \sum_{v=0}^m p(u,v) = W(m,m) = 0.$$
\end{proof}

{}

\end{document}